\newcommand {\N} {{\mathbb N}}
\newcommand {\C} {{\mathbb C}}
\newcommand {\R} {{\mathbb R}}
\newcommand {\Z} {{\mathbb Z}}
\newcommand {\OO} {{\mathcal O}}
\newcommand {\dt} {\bullet}
\newcommand {\cH} {\mathcal{H}}
\newcommand {\X} {\mathcal{X}}
\newcommand {\Y} {\mathcal{Y}}
\DeclareMathOperator{\Spec}{Spec}
\DeclareMathOperator{\dlog }{dlog}
\newtheorem{thm}[subsection]{Theorem}
\newtheorem{lemma}[subsection]{Lemma}
\newtheorem{remark}[subsection]{Remark}
\newtheorem{ex}[subsection]{Example}
\begin{document}
\title{Decomposition of direct images of  logarithmic differentials}

\author{ Donu Arapura}
\address{Department of Mathematics\\
Purdue University\\
West Lafayette, IN 47907\\
U.S.A.}
\thanks{Author partially supported by the NSF}
\email{dvb@math.purdue.edu}

\maketitle

In \cite{kollar}, Koll\'ar proved a remarkable theorem that given a
projective map $f$ from a smooth complex 
algebraic variety $X$ to an arbitrary variety $Y$, the derived direct image of the canonical sheaf $\R f_*\omega_X$ decomposes
as a sum $\bigoplus R^if_*\omega_X[-i]$. So in particular, the Leray
spectral sequence for $\omega_X$ degenerates.  Saito
\cite{saito}  gave a  second proof using his
theory of Hodge modules, which gives a good conceptual explanation 
for the theorem. Although the prerequisites for understanding it are
rather heavy. A third analytic
proof, of the degeneration at least, was given by Takegoshi \cite{takegoshi}.
The purpose of this note is to give another proof of the full theorem, which is fairly
short.  The basic idea is as follows. The weak semistable reduction theorem of Abramovich and Karu \cite{ak}, can be
interpreted as saying that any map can be converted, in the
appropriate sense, to a map which is particularly nice from the point
of view of logarithmic geometry \cite{kato}. For such maps, we prove
that derived direct images of the sheaves of logarithmic
differentials  decompose as above. It is important that we work with
 differentials of all degrees simultaneously, because the sum of  these direct images carries a Lefschetz
operator. The proof of the decomposition theorem ultimately comes down to
checking that this sum satisfies  the hard Lefschetz theorem and then
applying Deligne's
theorem \cite{deligne}.  The final step is to show that this  theorem
implies Koll\'ar's. 

The core arguments, which  take only three  pages, are given in
the second section. The  first section contains some background
material on logarithmic  geometry, which is included in the interest of making
this more accessible. We work over $\C$. The words morphism and map are used
interchangeably, according to whim. 

 I owe a special thanks to the principal referee for pointing out a number of
issues with the original exposition.
My thanks also to the other
referees  and T. Fujisawa and for various other comments.

\section{Some  log geometry}

In this expository section, we summarize some facts
about log schemes needed later. The details can be found in \cite{kato,
  ogus} together with some more specific references given below.  For our purposes, the basic
example is  given by a pair consisting of a smooth
 scheme $X$ and  a reduced divisor $E\subset X$ with normal crossings.
We refer to this as a {\em log pair}. We can segue to the more
flexible  notion of log structure by noting that  a log pair $(X,E)$,
gives rise to the
multiplicative  submonoid  $M\subset \OO_X$ of    functions invertible
outside of $E$. 
A {\em pre-log structure} on a scheme $X$ is a sheaf of commutative monoids
$M$ on the \'etale topology $X_{et}$ together with a homomorphism $\alpha:M\to \OO_X$, where the
latter is treated as a monoid with respect to multiplication. It is a
{\em log structure} if $\alpha^{-1}\OO_X^*\cong
\OO_X^*$.  For example, the  monoid associated to a log pair is a log
structure. In general, a  pre-log structure can be completed to a log structure in
a canonical way. A {\em log scheme} is a scheme equipped with a  log
structure. We will identify log pairs with the associated log scheme. In
order to avoid confusion below, we reserve the symbols $\X, \Y,
\ldots$ for log schemes, and use the corresponding symbols $X,Y,\ldots$ for the
underlying schemes. There are a number of other examples in addition to
the one given above. For example, any scheme $X$ can be turned into a log scheme with the {\em
    trivial} log structure $M=\OO_X^*$.

There is an important  connection between log  geometry and  toric
geometry. Indeed, given a finitely generated saturated submonoid $M\subseteq \Z^n$, the
affine toric variety $\Spec \C[M]$ is a log scheme
with respect to the pre-log structure induced by $M\to \C[M]$. 
More generally, recall that a  toroidal variety \cite{kkms} is given by a
variety $X$ and an open subset $U$, such that the pair $(X,U)$ is \'etale
locally isomorphic to a toric variety with its embedded torus. The toroidal
variety $X$ carries a log structure given locally by the one
above. The subset $U$ can be understood as the locus where this log
structure is trivial. In the log setting, it is convenient
to relax the condition on $M$ to allow it to be a
finitely generated monoid which is embeddable into an abelian group as a
saturated monoid.  If $M$ satisfies these conditions, it is called
{\em fine and saturated} or simply {\em fs}. Note that there is a canonical
choice for the ambient group, namely the group $M^{gp}$  given as a
group of fractions. In general, we
want to restrict our attention to log schemes (called fs log schemes)
for which the {\em characteristic monoids} $\overline{ M}_x=
M_x/\OO^*_x$ are fs for all geometric points $x$.

 A morphism of log schemes consists of a
morphism of schemes and a compatible morphisms of monoids.  Of course,
any morphism of schemes can be regarded as a morphisms of log schemes
when equipped with the trivial log structures. Here  are some more interesting examples.

\begin{ex}
Given log pairs $\X=(X,E)$ and $\Y = (Y,D)$.
  A semistable morphism of log schemes $f:\X\to \Y$ is a
morphism $f$ of schemes which is given \'etale locally (or
analytically)  by
$$y_1=x_1x_2\ldots x_{r_1}$$
$$y_2=x_{r_1+1}\ldots x_{r_2}$$
$$\ldots$$
$$y_{d+1}=x_{r_d+1}$$
$$\ldots$$
such that $x_1\ldots x_{r_d}$ and  $y_1\ldots y_{d}$ are the local
equations for $E$ and $D$ respectively.
\end{ex}

\begin{ex}\label{ex:chart}
  A homomorphism of fs monoids $P\to Q$  induces a
  morphism of fs log schemes $\Spec \C[Q]\to \Spec \C[P]$. In
  particular, toric morphisms of toric varieties are morphisms of log schemes.
\end{ex}

Log structures give rise to logarithmic differentials in a rather
general way. Given a log pair $\X=(X,E)$, set
$$\Omega_{\X}^k = \Omega_X^k(\log E)$$
which is generated locally by $k$-fold wedge products of
$$\frac{dx_1}{x_1},\ldots \frac{dx_{r_d}}{x_{r_d}}, dx_{r_d+1},\ldots$$
More generally, given a log scheme $\X$  over  $\C$, we define the $\OO_X$-module
$\Omega_\X^1=\Omega_{\X/\C}^1$ as the universal sheaf which receives  a $\C$-linear derivation $d:\OO_X\to
\Omega_{\X}^1$ and homomorphism $\dlog:M\to \Omega_{\X}^1$ satisfying
$m\dlog m = dm$.  Of course, by the universal property of ordinary
K\"ahler differentials, we have a map $\Omega_X^1\to\Omega_\X^1$
taking $df$ to $df$, but this is generally not an isomorphism.
There is a relative version  of differentials $\Omega_{\X/\Y}^1$ for a
morphism $f:\X\to \Y$ of log schemes. This  fits into an exact sequence
$$f^*\Omega_\Y^1\to \Omega_\X^1\to \Omega_{\X/\Y}^1\to 0$$

There is a notion of smoothness in this setting, called {\em log
  smoothness}, which can be defined using a variant of the usual
infinitesimal lifting condition \cite[\S 3.3]{kato}.
However, it is weaker than the
name suggests. For instance,
while smoothness implies flatness, the corresponding statement for log
smoothness is false. Nevertheless, some expected properties do hold.
 For a log smooth map
$\Omega_{\X/\Y}^1$ and therefore its exterior powers $\Omega_{\X/\Y}^k$,
are locally free. Kato \cite[thm 3.5]{kato} gives a criterion which
allows us to verify  some basic examples:

\begin{ex}
Toric  (and more generally toroidal) varieties are log smooth over $\Spec \C$
with its trivial log structure. 
\end{ex}

\begin{ex}\label{ex:semi}
 A semistable map between log pairs is a log smooth morphism. 
\end{ex}

To rectify some of the defects of log smoothness just alluded to,  we need a few more
conditions. A morphism of fs  monoids $h:P\to Q$ is exact if satisfies $(h^{gp})^{-1}(Q)= P$;
it is integral if $\Z[P]\to \Z[Q]$ is flat; it is saturated if for any
morphism $P\to P'$ to an fs monoid the push out $Q\oplus_P P'$ is fs; and it is vertical if the  image of $P$
does not lie in any proper face of $Q$. For example, the diagonal
embedding $\N\subset \N^n$ satisfies all of these conditions.
A morphism of $f:(X,M)\to (Y,N)$ of fs log schemes,
 is respectively {\em exact}, {\em
  integral}, {\em saturated} or {\em vertical}, if the map of characteristic monoids
$(f^{-1}\overline{N})_x\to \overline{M}_x$ has the same property for each geometric
point $x$.
 Integral maps are exact and also flat as a map of
schemes. Saturated morphisms are integral with reduced fibres \cite[\S
3.6]{it}
Here are the key examples for us.

\begin{ex}\label{ex:semi2}
 A semistable map between log pairs is  vertical and saturated (and
 therefore integral and therefore exact). This is because the maps of characteristic monoids are sums
 of diagonal embeddings $\N^m\subset \N^{n_1+\ldots +n_m}$.
\end{ex}

\begin{ex}\label{ex:semi3}
Abramovich and Karu \cite[def 0.1]{ak} define a map from a  scheme to a
regular scheme to be weakly semistable  if  it is  toroidal and  equidimensional with reduced fibres.
Such a map is log smooth vertical and saturated when the
  schemes are endowed with the log structures  induced from the toroidal
  structures, cf  \cite[rmk 3.6.6]{it}. 
\end{ex}

We note that there is a parallel  theory of  log analytic
spaces given by an analytic space $(X,\OO_X)$ and a homomorphism of
monoids $\alpha:M\to \OO_X$ satisfying conditions similar to those
above. Most of the basic definitions and constrictions carry over as before.
In addition, there is a new
construction often called the {\em real blow up} \cite{kn}. 
Given  an fs log analytic space  $\X$, we can define a new topological
space $\X^{log}$ and a continuous map $\lambda:\X^{log}\to
X$. 
As a set $$\X^{log}=
\left\{ (x,h)\mid x\in X, h\in Hom(M^{gp}_{x}, S^1), \forall f\in \OO_{x},
h(f)= \frac{f(x)}{|f(x)|} \right\}
$$
and $\lambda$ is given by the evident projection.
For example, when $\X$ is given by the log pair $(\C^n,\{z_1\ldots z_n=0\})$, 
$\X^{log} = ([0,\infty)\times S^1)^n$ as a topological space, with $\lambda(r_1,
u_1,r_2,u_2,\ldots)\mapsto (r_1u_1, r_2u_2,\ldots)$. The construction
is functorial so that a morphism  $f:\X\to \Y$ gives a continuous map
$f^{log}:\X^{log}\to \Y^{log}$ compatible with the $\lambda$'s.
We note also that  $\X^{log}$ can be made into a ringed space with structure
sheaf $\OO_{\X^{log}}$ such that $\lambda$ becomes a morphism.
One can picture $(X,E)^{log}$ as adding an ideal boundary to $X-E$ which is homeomorphic to the boundary
of a tubular neighbourhood about $E$, and this picture is compatible
with  what is happening on $Y$. This leads to the remarkable fact, due
to Usui, that  when $f$ is proper and semistable $f^{log}$ is topologically a fibre bundle. A generalization of this due to Nakayama and Ogus  \cite{no} will be used below.

\section{Main theorems}

\begin{thm}\label{thm:decomp2}
  Let $f:\X\to \Y$ be a projective  exact vertical log smooth map  of fs log schemes and suppose
  that $\Y$ is   the log scheme associated to a
    log pair.
Then
$$\R f_* \Omega_{\X/\Y}^k \cong \bigoplus_i R^i
f_*\Omega_{\X/\Y}^k[-i]$$
for all $k$.
\end{thm}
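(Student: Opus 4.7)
My plan is to invoke Deligne's decomposition criterion \cite{deligne}: a bounded complex $K^\bullet$ on $Y$ equipped with an endomorphism $L$ of degree two splits as $\bigoplus_i \mathcal{H}^i(K^\bullet)[-i]$ provided iterates of $L$ induce hard Lefschetz isomorphisms $\mathcal{H}^{n-j}(K^\bullet) \xrightarrow{\sim} \mathcal{H}^{n+j}(K^\bullet)$. The relative Lefschetz operator --- cup product with the first Chern class of a relatively ample line bundle --- shifts both the cohomological degree and the form degree by one, so it does not preserve any single $\R f_*\Omega_{\X/\Y}^k$. With $n$ the relative dimension, I would therefore assemble all $k$ into one object $K^\bullet = \R f_*\bigl(\bigoplus_k \Omega_{\X/\Y}^k[-k]\bigr)$ (the associated graded of the Hodge-filtered log de Rham complex, equipped with trivial differential), so that $\mathcal{H}^m(K^\bullet) = \bigoplus_{i+k = m} R^if_*\Omega_{\X/\Y}^k$ and $L$ becomes a genuine degree-two endomorphism. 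A decomposition of $K^\bullet$ splits off each $\R f_*\Omega_{\X/\Y}^k$ individually and yields the theorem.

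\textbf{Reduction to fibres.} Hard Lefschetz is a statement about stalks. The hypotheses that $f$ is projective, exact, vertical, and log smooth over the log pair $\Y$ place us in the setting where, by the Nakayama--Ogus generalization \cite{no} of Usui's theorem recalled at the end of Section 1, the associated map $f^{log}:\X^{log}\to \Y^{log}$ is a topological fibre bundle. Combining this bundle structure with the degeneration of the logarithmic Hodge-to-de Rham spectral sequence and the log de Rham comparison, the stalk at $y \in Y$ of $R^if_*\Omega_{\X/\Y}^k$ is identified with the log Hodge group $H^i(\X_y, \Omega_{\X_y}^k)$ of the fibre $\X_y$, and the coherent Lefschetz operator is identified with the topological cup product on the fibre of $f^{log}$.

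\textbf{Fibrewise hard Lefschetz.} A fibre $\X_y$ of an exact vertical log smooth projective morphism is a projective log smooth log scheme over a log point whose underlying scheme is reduced (exactness and the fs hypothesis supply the needed integrality and reducedness). By Steenbrink--Deligne theory, with Saito's framework as the modern conceptual home, the log cohomology $H^m(\X_y^{log},\C)$ carries a \emph{pure} Hodge structure whose Hodge filtration has associated graded $\bigoplus_k H^{m-k}(\X_y, \Omega_{\X_y}^k)$, and the polarization class acts as a morphism of Hodge structures of type $(1,1)$. The classical hard Lefschetz theorem for polarized pure Hodge structures then gives $L^j: \mathcal{H}^{n-j}(K^\bullet)_y \xrightarrow{\sim} \mathcal{H}^{n+j}(K^\bullet)_y$, completing the hypotheses of Deligne's criterion.

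\textbf{Main obstacle.} I expect the delicate step to be the second one. Since log smoothness does not imply ordinary flatness, a careful version of cohomology and base change adapted to fs log smooth exact vertical morphisms is needed in order to match, fibrewise, the coherent Lefschetz operator on $R^if_*\Omega_{\X/\Y}^k$ with the topological one on $H^\bullet$ of a fibre of $f^{log}$. Once this identification is secured, Steenbrink's purity theorem supplies fibrewise hard Lefschetz and Deligne's criterion produces the decomposition.
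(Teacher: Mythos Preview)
Your overall strategy---assembling $K^\bullet=\bigoplus_k \R f_*\Omega_{\X/\Y}^k[-k]$, equipping it with the relative Lefschetz operator, and appealing to Deligne's criterion---matches the paper exactly. The divergence, and the gap, is in your ``Fibrewise hard Lefschetz'' step.

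You assert that $H^m(\X_y^{log},\C)$ carries a \emph{pure} Hodge structure. This is false in general: for $y\in D$ the log fibre computes the nearby (limit) cohomology, and Steenbrink's limit Hodge structure is genuinely \emph{mixed} whenever the local monodromy is nontrivial. So ``classical hard Lefschetz for polarized pure Hodge structures'' is not available, and the heavy machinery you invoke (Steenbrink--Deligne, Saito) does not deliver the statement in the form you need without further work.

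The paper sidesteps this entirely. Having used Nakayama--Ogus to make $f^{log}$ a topological fibre bundle, it observes that $R^if^{log}_*\C$ is a \emph{local system} on $\Y^{log}$. Over $Y\setminus D$ the fibres are smooth projective, so classical hard Lefschetz gives $L^i:R^{d-i}f^{log}_*\C\xrightarrow{\sim}R^{d+i}f^{log}_*\C$ there; being a map of local systems, it is then an isomorphism everywhere. No limit Hodge theory is needed.

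There is a second point you glide over. Hard Lefschetz for the topological (or log de~Rham) cohomology does not automatically yield hard Lefschetz on the Hodge \emph{graded} pieces $\bigoplus_k R^{i-k}f_*\Omega^k_{\X/\Y}$, because $L$ is a priori only a filtered map. The paper handles this with an elementary linear-algebra lemma (Lemma~\ref{lemma:lin}): a filtered isomorphism between spaces whose graded pieces have equal dimensions is a graded isomorphism. The required equality of ranks of $R^{d-i-k}f_*\Omega^k_{\X/\Y}$ and $R^{d-k}f_*\Omega^{k+i}_{\X/\Y}$ comes from local freeness (via \cite{ikn}) and comparison with the smooth locus. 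Your proposal would need either this step or an appeal to strictness of morphisms of mixed Hodge structures, neither of which you state.

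In short: keep your Strategy and Reduction paragraphs, replace the Steenbrink/purity argument by the local-system propagation from the generic fibre, and insert the filtered-to-graded passage.
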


Readers dismayed by this long string of adjectives should keep in mind that
a projective semistable map satisfies all of the above assumptions, and therefore
the conclusion. The semistable case, however, is not strong enough to
imply Koll\'ar's theorem. We really need the version just stated.
Before giving the proof, we record the following elementary fact,
which can be proved by induction on the length of the filtrations.

\begin{lemma}\label{lemma:lin}
  Suppose that $(V_i,F_i^\dt)$ are two filtered finite dimensional
  vector spaces such that $\dim Gr^p(V_1)=\dim Gr^p(V_2)$ for all $p$.
Then a linear isomorphism $L:V_1\to V_2$ which is compatible with the
filtrations will induce an isomorphism of associated graded spaces.
\end{lemma}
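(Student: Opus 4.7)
The plan is to show that $L$ must in fact carry $F_1^p$ isomorphically onto $F_2^p$ at every level, from which the graded statement falls out immediately. The author suggests induction on the length of the filtrations, but in fact a direct dimension count works and I would present it that way (the inductive formulation just packages the same idea).

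First I would observe that since $L$ is filtration-compatible, $L(F_1^p)\subseteq F_2^p$ for every $p$, so $L$ induces linear maps
\[
Gr^p L : Gr^p(V_1)\longrightarrow Gr^p(V_2).
\]
Because the hypothesis guarantees $\dim Gr^p(V_1)=\dim Gr^p(V_2)$, it suffices to show each $Gr^p L$ is surjective (equivalently injective).

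The key reduction is to upgrade the hypothesis on graded dimensions to a statement about the filtered pieces themselves. Since everything is finite-dimensional, the filtrations are bounded, so
\[
\dim F_i^p=\sum_{q\geq p}\dim Gr^q(V_i),
\]
and the hypothesis therefore yields $\dim F_1^p=\dim F_2^p$ for every $p$. Now $L$ is injective (being an isomorphism), and $L(F_1^p)$ is a subspace of $F_2^p$ of dimension $\dim F_1^p=\dim F_2^p$; hence $L(F_1^p)=F_2^p$. Passing to the quotient by $F_i^{p+1}$ makes $Gr^p L$ surjective, and by equidimensionality it is an isomorphism.

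There is no real obstacle: the only point to verify is that the filtrations are bounded so that the dimension sum makes sense, which is automatic from the finite-dimensionality assumption. (If one prefers the induction framing, the base case is a one-step filtration where $Gr^p L=L$, and the inductive step applies the above argument to the top piece $F_1^{p_{\max}}\to F_2^{p_{\max}}$ and then to the quotient filtrations on $V_i/F_i^{p_{\max}}$.)
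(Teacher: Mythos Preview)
Your argument is correct and is essentially the approach the paper indicates: the paper does not spell out a proof but merely remarks that the lemma ``can be proved by induction on the length of the filtrations,'' and your direct dimension count (together with the inductive repackaging you sketch at the end) is exactly that argument made explicit.
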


\begin{proof}[Proof of theorem]
Let $d$ denote the relative dimension of $f$. Let $\Y$ be defined by
the log pair $(Y,D)$. The verticality assumption implies that the log
structure of $\X$ is trivial on $U = X-f^{-1}D$. The restriction $f|_U:U\to
Y-D$ is smooth and projective in the usual sense. So it is
topologically a fibre bundle. In fact a theorem of Nakayama and Ogus \cite{no} shows
that $f|_U$ prolongs naturally to a fibre bundle $f^{log}:\X^{log}\to
\Y^{log}$ over $\Y^{log}$.

Using  the work of  Illusie, Kato and Nakayama
\cite[cor 7.2]{ikn}, we may  conclude that sheaves $R^i f_*\Omega_{\X/\Y}^k$ are
 locally free and that the relative  Hodge to de Rham spectral sequence
 degenerates.  Let
$\eta'\in H^1(X,\Omega_X^1)$ denote   $c_1$ of a
relatively ample  line bundle, and
let  $\eta\in H^1(\Omega_{\X/\Y}^1)\cong
Hom_{D(Y)}(\OO_Y,\R f_*\Omega^1_{\X/\Y}[1])$ denote the image of
$\eta'$. Let $V^k=\R f_*\Omega_{\X/\Y}^k$ and $V=\bigoplus_k V^k[-k]$.
  We have a Lefschetz operator $L:V^k\to V^{k+1}[1]$
given by cup product  with $\eta$.
By adding these, we get a map $L:V\to V[2]$. Our goal  is to establish
the hard Lefschetz property that $L^i$ induces an isomorphism
\begin{equation}
  \label{eq:HL}
  L^i:\mathcal{H}^{d-i}(V)\cong \mathcal{H}^{d+i}(V)
\end{equation}
for all $i$. Then the theorem will follow from Deligne's
theorem \cite{deligne}.  
Note that we have canonical isomorphisms
\begin{equation}
  \label{eq:cHV}
\cH^i(V)_y \cong \bigoplus_k   (R^{i-k}f_*\Omega^k_{\X/\Y} )_y\cong
\bigoplus_k (Gr_F^k\R^i f_*\Omega_{\X/\Y}^\dt)_y 
\end{equation}
where $F$ is the Hodge filtration. This isomorphism respects the action by $L$.
We note also that the ranks of $R^{d-i-k}f_*\Omega^k_{\X/\Y}$ and
$R^{d-k}f_*\Omega^{k+i}_{\X/\Y}$ coincide with the ranks of
$R^{d-i-k}f_*\Omega^k_{U/Y-D}$ and $R^{d-k}f_*\Omega^{k+i}_{U/Y-D}$
respectively, and therefore with each other. Therefore
by lemma \ref{lemma:lin} and Nakayama's lemma, it suffices to check
the hard Lefschetz property for $(\R^i f_*\Omega_{\X/\Y}^\dt)_y$ and
all $y\in Y$.
By \cite{fkato} , we have a canonical identification 
\begin{equation}
  \label{eq:RfOmega}
\lambda^*\R f_*\Omega_{\X/\Y}^\dt\cong  \R f^{log}_*\C\otimes \OO_{Y^{log}}
\end{equation}
Note that $\R f^{log}_*\C$ also has  an action by $L$, and \eqref{eq:RfOmega} respects these actions.
Since the stalk $(R^if^{log}_*\C)_y$ is just $H^i(X_y,\C)$, when $y\notin D$, and $R^i f^{log}_*\C$ is a local system,
we can conclude that we have a hard Lefschetz theorem everywhere,
i.e. $L^i: R^{d-i} f^{log}_*\C\cong  R^{d+i}f^{log}_*\C$. By the
previous remarks, this implies \eqref{eq:HL} and consequently the theorem.
\end{proof}

\begin{remark}
  The above argument can be pushed a bit to imply the same
  decomposition for a proper holomorphic semistable map
 of analytic log pairs  provided that there is a $2$-form on $X$ which
 restricts to a K\"ahler form on the components of all the fibres.
For this  we may appeal to a
  theorem of Fujisawa \cite[thm 6.10]{fujisawa} to conclude $R^i f_*\Omega_{\X/\Y}^k$ are
 locally free and that the relative  Hodge to de Rham spectral
 degenerates. The rest of the argument is the same as above.
\end{remark}

Before turning to Koll\'ar's theorem, we need the following:

\begin{lemma}\label{lemma:2}
Suppose that  $\Y=(Y,D)$ is a log pair, $X$ is a variety with  normal
Gorenstein singularities, and that $f:X\to Y$ is map such that the log
scheme $\X$ defined by $f^{-1}D$ is fs. We suppose furthermore that
 $f:\X\to \Y$ is log smooth and
saturated. Then
  $\Omega_{\X/\Y}^d\cong \omega_{X/Y}$, where $d=\dim X-\dim Y$ and
  $\omega_{X/Y}$ is the relative dualizing sheaf.
\end{lemma}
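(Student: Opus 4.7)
My plan is to use the short exact sequence of relative log differentials to reduce the statement to identifying the log canonical sheaves with their ordinary canonical-plus-boundary counterparts. First I would check that both $\Omega_{\X/\Y}^d$ and $\omega_{X/Y}$ are line bundles on $X$. The former holds because log smoothness makes $\Omega_{\X/\Y}^1$ locally free of rank $d$. For the latter, saturation of $f$ forces $f$ to be integral and hence flat; combined with $X$ Gorenstein and $Y$ smooth, this makes the fibres of $f$ Gorenstein, so $\omega_{X/Y}$ is invertible and satisfies $\omega_X\cong f^*\omega_Y\otimes \omega_{X/Y}$. Moreover, saturation implies that $f$ has reduced fibres, so the scheme-theoretic preimage of $D$ is reduced, giving $E = f^*D$ as Cartier divisors and $\OO_X(E)\cong f^*\OO_Y(D)$.

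Taking top exterior powers in the sequence
\[
0 \to f^*\Omega_\Y^1 \to \Omega_\X^1 \to \Omega_{\X/\Y}^1 \to 0,
\]
which is exact on the left by log smoothness and has all three terms locally free, yields
\[
\Omega_\X^{\dim X} \cong f^*\Omega_\Y^{\dim Y} \otimes \Omega_{\X/\Y}^d.
\]
For the log pair $\Y=(Y,D)$ the standard identification gives $\Omega_\Y^{\dim Y}\cong \omega_Y(D)$. The crux of the argument, and the main obstacle I anticipate, is the analogous formula $\Omega_\X^{\dim X}\cong \omega_X(E)$, which must be proved in the absence of smoothness of $X$.

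To establish this, I would note that both $\Omega_\X^{\dim X}$ and $\omega_X(E)$ are line bundles, hence reflexive, on the normal variety $X$, and they agree canonically on the open set $U=X\setminus E$ where the log structure is trivial. Since $X$ is normal, its smooth locus $X^{\mathrm{sm}}$ has complement of codimension at least two. By Kato's criterion, \'etale locally the morphism $f$ is obtained by smooth base change from a toric morphism $\Spec\C[Q]\to \Spec\C[P]$ with $P=\N^r$; the smooth locus of $\Spec\C[Q]$ is \'etale-locally of the form $\A^s\times(\C^*)^t$ with toric boundary a normal crossings divisor, and pulling back under the smooth projection $X\to \Spec\C[Q]$ shows that $E\cap X^{\mathrm{sm}}$ is a normal crossings divisor on $X^{\mathrm{sm}}$. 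Thus $\X|_{X^{\mathrm{sm}}}$ is a genuine log pair, on which the identification $\Omega_\X^{\dim X}\cong \omega_X(E)$ is the usual one; by reflexivity it extends to all of $X$.

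Combining these identifications, the boundary twists cancel in the determinant formula to yield
\[
\Omega_{\X/\Y}^d \cong \omega_X\otimes \OO_X(E)\otimes f^*(\omega_Y\otimes \OO_Y(D))^{-1} \cong \omega_X\otimes f^*\omega_Y^{-1} \cong \omega_{X/Y},
\]
as required.
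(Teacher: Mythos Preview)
Your argument is correct, and it takes a genuinely different route from the paper's. The paper does not use the determinant of the short exact sequence directly; instead it writes $\Omega_{\X/\Y}^d\otimes\omega_{X/Y}^{-1}\cong\OO_X(E')$ for some Cartier divisor $E'$ supported on $f^{-1}D$, base-changes to a general curve $C\subset Y$ meeting all components of $D$ (using flatness to ensure nothing is lost), and then uses reducedness of the special fibre to perform an explicit local-coordinate computation at a general point of each component of $E'$, showing $E'=0$. Your approach is more structural: you identify both $\Omega_\X^{\dim X}$ and $\Omega_\Y^{\dim Y}$ globally as $\omega(\text{boundary})$, the nontrivial case being $\X$, where you invoke Kato's chart criterion to see that $E\cap X^{\mathrm{sm}}$ is normal crossings and then extend by reflexivity; the cancellation $\OO_X(E)\cong f^*\OO_Y(D)$ then does the work that the paper's curve computation does. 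Your method makes the role of saturation more transparent (it is exactly what forces $f^*D$ to be reduced so that the boundary twists match), and it avoids the reduction to curves entirely; the paper's method, in exchange, sidesteps the toric-chart and reflexivity arguments in favour of a completely elementary endgame. One small point worth making explicit in your write-up: when you pass from ``reduced fibres'' to ``$f^*D$ is reduced,'' you are implicitly using flatness again to ensure that distinct components $D_i$ of $D$ pull back to divisors with no common component, so that $f^*D=\sum f^*D_i$ stays reduced.
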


\begin{proof}
The line bundle 
\begin{equation}
  \label{eq:OmE}
  \Omega_{\X/\Y}^d\otimes
\omega_{X/Y}^{-1}\cong \OO_{X}(E)
\end{equation}
where $E=\sum n_iE_i$ is a Cartier divisor supported on
$f^{-1}D$.  In fact, we can make the choice of $E$ canonical. The
divisor $E$ is determined by its restriction to the regular locus of
$U\subseteq X$, since the complement of $U$ has codimension at least
$2$. So we may  replace $X$ by $U$. Then  $E$ is  the divisor of the canonical
map
$$\omega_{X/Y}\cong \Omega^d_{X}\otimes
f^*\omega_{Y}^{-1}\to \Omega^d_{\X/\Y}$$
Our goal is to  show that $E$ is in fact trivial as a
divisor. Since this is now a local problem, there is no loss of
generality in assuming that $Y$ is affine.
The log smoothness and saturation assumptions imply that $f$ is flat. Therefore all of the irreducible components  of
$f^{-1}D$, and in particular $E$, must map onto components of
$D$. Thus the preimage of    a general
curve $C\subset Y$ will meet all the components of $E$. The conditions of log smoothness and
saturation  and the isomorphism \eqref{eq:OmE} are stable under base
change to $C$. Therefore we may assume that $Y$ is a curve and that $D$
is  a point with local parameter $y$. The fibre $f^{-1}D$ is
reduced because $f$ is saturated. Choose a general point $p$ of an irreducible component $E_i$
of $E$. Then $E_i$ is smooth in neighbourhood of $p$ and $f^*y$ gives
a defining equation for it. Thus we may choose coordinates $x_1=f^*y,
x_2,\ldots x_n$  at $p$.  Then a local generator of
$\Omega_{\X/\Y}^d$ at $p$ is given by
$$(d\log x_1\wedge dx_2\wedge \ldots \wedge dx_n)\otimes (d\log
y)^{-1} = (d x_1\wedge dx_2\wedge \ldots \wedge dx_n)\otimes
(dy)^{-1}$$
which coincides with a generator of $\omega_{X/Y}$. Thus $E$ is trivial.

\end{proof}

\begin{thm}[Koll\'ar]
If $X$ smooth and $f:X\to Y$ is projective then
  $$\R f_* \omega_{X} \cong \bigoplus_i R^i f_*\omega_{X}[-i]$$
\end{thm}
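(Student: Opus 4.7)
My plan is to reduce Koll\'ar's theorem to Theorem \ref{thm:decomp2} by applying Abramovich--Karu weak semistable reduction to $f$, identifying the top logarithmic differential with the dualizing sheaf via Lemma \ref{lemma:2}, and descending back to $X$ through a Grothendieck duality trace argument.

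I begin by applying Abramovich--Karu to $f:X\to Y$, obtaining a projective birational modification $\sigma:Y'\to Y$ with $Y'$ regular, a projective alteration $\tau:X'\to X$, and a weakly semistable map $f':X'\to Y'$ with $f\tau=\sigma f'$. By Example \ref{ex:semi3}, $f'$ is log smooth, vertical, and saturated (hence exact) for the natural toroidal log structures, and $\Y'$ is a log pair. Theorem \ref{thm:decomp2}, applied with $k=d=\dim X'-\dim Y'$, yields
$$\R f'_*\Omega^d_{\X'/\Y'}\cong\bigoplus_i R^if'_*\Omega^d_{\X'/\Y'}[-i].$$
Since weakly semistable varieties are normal Gorenstein (being toroidal with reduced fibres), Lemma \ref{lemma:2} identifies $\Omega^d_{\X'/\Y'}$ with $\omega_{X'/Y'}$. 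Tensoring with the line bundle $\omega_{Y'}$ (available because $Y'$ is smooth) and invoking the projection formula gives
$$\R f'_*\omega_{X'}\cong\bigoplus_i R^if'_*\omega_{X'}[-i]$$
on $Y'$.

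Next I would show that $\omega_X$ is a direct summand of $\R\tau_*\omega_{X'}$ in $D^b(X)$. Composing $\tau$ with a resolution $\rho:\tilde X\to X'$ produces an alteration $\tilde\tau:\tilde X\to X$ from a smooth variety; Grauert--Riemenschneider (together with rationality of the toroidal singularities of $X'$) gives $\R\rho_*\omega_{\tilde X}\cong\omega_{X'}$, so $\R\tilde\tau_*\omega_{\tilde X}\cong\R\tau_*\omega_{X'}$. For the generically finite $\tilde\tau$ of degree $e$ into a smooth target, the Grothendieck trace supplies a canonical map $\omega_X\to\R\tilde\tau_*\omega_{\tilde X}$ whose composition with the counit $\R\tilde\tau_*\omega_{\tilde X}\to\omega_X$ is multiplication by $e$, hence invertible over $\C$. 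Applying $\R f_*$ and using $f\tau=\sigma f'$, I conclude that $\R f_*\omega_X$ is a direct summand of
$$\R\sigma_*\R f'_*\omega_{X'}\cong\bigoplus_i\R\sigma_*(R^if'_*\omega_{X'})[-i].$$

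The main obstacle is the final descent through $\sigma$. Because formality is inherited by direct summands, it suffices to know that the right-hand side above is formal on $Y$; the cleanest way to achieve this is a Grauert--Riemenschneider-type vanishing $R^j\sigma_*(R^if'_*\omega_{X'})=0$ for $j>0$, which would collapse each $\R\sigma_*(R^if'_*\omega_{X'})$ to a single coherent sheaf concentrated in degree zero and so express the right-hand side as a direct sum of shifted sheaves. Since the sheaves $R^if'_*\omega_{X'/Y'}$ are locally free on the smooth base $Y'$ (by the argument in the proof of Theorem \ref{thm:decomp2}) and are topologically governed by the local system on the real blow-up $(Y')^{log}$ coming from Nakayama--Ogus, I would try to extract enough Hodge-theoretic positivity from this picture to obtain the required vanishing under the birational contraction $\sigma$. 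This vanishing is where I expect the principal technical difficulty to lie.
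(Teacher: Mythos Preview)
Your plan is essentially the paper's own argument: apply Abramovich--Karu, invoke Theorem~\ref{thm:decomp2} in top degree together with Lemma~\ref{lemma:2} to get the decomposition for $\omega_{X'}$ on $Y'$, pass to a resolution $g:\tilde X\to X'$ using Grauert--Riemenschneider and the rationality of the (Gorenstein, toroidal) singularities of $X'$, and descend via a trace splitting. One inaccuracy: in Abramovich--Karu the base change $Y'\to Y$ is only a \emph{generically finite} alteration, not a birational modification; correspondingly $X'$ is birational to $X\times_Y Y'$, so $\pi\circ g:\tilde X\to X$ is generically finite and the paper normalizes the Grothendieck trace by $1/\deg(X'/X)$. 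This does not change the shape of your argument.

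Where you diverge from the paper is in the final descent. The paper does \emph{not} attempt any vanishing of the form $R^j p_*(R^if'_*\omega_{X'})=0$, nor any Hodge-positivity argument. It works on $X$ rather than on $Y'$: since $\pi\circ g$ is generically finite from a smooth source, Grauert--Riemenschneider gives $\R(\pi g)_*\omega_{\tilde X}=(\pi g)_*\omega_{\tilde X}$, and the trace exhibits $\omega_X$ as a direct summand of this \emph{sheaf} on $X$. Applying $\R f_*$, one gets $\R f_*\omega_X$ as a direct summand of $\R f_*(\pi g)_*\omega_{\tilde X}=\R p_*\R(f'g)_*\omega_{\tilde X}$, and the paper then simply says that applying the trace to the decomposition \eqref{eq:2} yields the desired decomposition on $Y$. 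So your proposed detour through a Kodaira-type vanishing for $R^if'_*\omega_{X'}$ under $p$ is not the route the paper takes; you have, however, put your finger on the one step in the paper that is genuinely terse.
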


\begin{proof}
The first step is to apply the log version of the  weak semistable
reduction theorem of Abramovich and
 Karu \cite{ak}. Although Illusie and Temkin
 \cite[thm 3.10]{it} have given such a version, it is a bit simpler
 to use the original form of the  theorem, and then translate the conclusion.
The  theorem  yields a diagram
 $$\xymatrix{
 X'\ar[d]^{f'}\ar[r]^{\pi} & X\ar[d]^{f} \\ 
 D\subset Y'\ar[r]^{p} & Y
}$$
where $p$ is generically finite, $Y'$ is smooth,  $D$ is a divisor with simple
normal crossings, $X'$ is birational to the fibre product, and $f'$ is
weakly semistable. Then, as we explained in example \ref{ex:semi3}, $f'$ is log smooth exact vertical and
saturated with respect to the log schemes $\X'$ defined by
$f^{-1}D$ and $\Y'$ defined by $D$. Furthermore,
 it is known that $X'$ has rational Gorenstein singularities \cite[lemma 6.1]{ak}.

By   lemma \ref{lemma:2}  and theorem~\ref{thm:decomp2}, we obtain
$$\R f'_* \omega_{ X'/Y'} =\bigoplus R^if'_*\omega_{ X'/Y'}[-i]$$
and therefore
\begin{equation}
  \label{eq:Rf}
  \R f'_* \omega_{X'} =\bigoplus R^if'_*\omega_{X'}[-i]. 
\end{equation}
 Fix a resolution of singularities $g:\tilde X\to X'$. Then $\R g_*\omega_{\tilde
  X}= g_*\omega_{\tilde X}= \omega_{X'}$, where the first equality follows  from the Grauert-Riemenschneider
vanishing theorem, and the second from the fact that $X'$ has rational
singularities. This together with \eqref{eq:Rf} shows that
\begin{equation}
  \label{eq:2}
  \R (f'\circ g)_* \omega_{\tilde X} =\bigoplus R^i(f'\circ g)_*\omega_{\tilde X}[-i]. 
\end{equation}
We have an inclusion $(\pi\circ g)^*\omega_X\subset
\omega_{\tilde X}$ which gives an injection
$$\sigma:\omega_{X}\hookrightarrow (\pi\circ g)*(\pi\circ g)^*\omega_X \hookrightarrow 
(\pi\circ g)_*\omega_{\tilde X}$$
The map $\sigma$ splits  the normalized Grothendieck trace
$$\tau=\frac{1}{\deg X'/X}tr:\R (\pi\circ g)_*\omega_{\tilde X} \cong (\pi\circ g)_*\omega_{\tilde
  X}\to \omega_X $$
It follows that $\omega_X$ is a direct summand of $(\pi\circ g)_*\omega_{\tilde
  X}$, and that this relation persists after applying a direct image functor.
Therefore  applying $\tau$ to \eqref{eq:2} yields
$$\R f_* \omega_{X} =\bigoplus R^if_*\omega_{X}[-i]. $$
\end{proof}


\begin{thebibliography}{99}


\bibitem[AK]{ak} D.  Abramovich, K. Karu, {\em Weak semistable reduction in
    characteristic $0$},  Inventiones (2000)




\bibitem[D]{deligne} P. Deligne {\em Th\'eor\`eme de Lefschetz et
    critères de d\'eg\'en\'erescence de suites spectrales.} Publ. IHES (1968)



\bibitem[F]{fujisawa} T. Fujisawa, {\em Limits of Hodge structures in
    several variables}, Compositio Math. 115 (1999)





\bibitem[IKN]{ikn} L. Illusie, K. Kato, C. Nakayama, {\em Quasi-unipotent
    logarithmic Riemann-Hilbert correspondences.} 
J. Math. Sci. Univ. Tokyo 12 (2005); erratum J. Math. Sci. Univ. Tokyo 14 (2007)

\bibitem[IT]{it} L. Illusie, M. Temkin, {\em Gabber's modification (log
    smooth case)}, exp. X, Travaux de Gabber...., available at  http://www.math.polytechnique.fr/$\sim$orgogozo/

\bibitem[K1]{fkato} F. Kato, {\em Relative log Poincar\'e lemma and
    relative log de Rham theory}, Duke Math. J (1998)


\bibitem[K2]{kato} K. Kato, {\em Logarithmic structures of
    Fontaine-Illusie}, Algebraic analysis, geometry, and number
  theory,  Johns Hopkins Univ. Press (1989)

\bibitem[KN]{kn} K. Kato, C. Nakayama, {\em Log Betti cohomology, log \'etale cohomology, and log de Rham cohomology of log schemes over $\C$}, Kodai Math. J. 22 (1999)


\bibitem[KKMS]{kkms} G. Kempf, F. Knudsen, D. Mumford, B. Saint-Donat,
  {\em Toroidal embeddings I}, Springer LNM 339, (1973)


\bibitem[Ko]{kollar} J. Koll\'ar, {\em Higher direct images of dualizing
    sheaves II}, Ann. Math. (1986)

\bibitem[NO]{no} C. Nakayama, A. Ogus, {\em Relative rounding in toric and logarithmic geometry.}
Geom. Topol. 14 (2010)

\bibitem[O]{ogus} A. Ogus, {\em Lectures on logarithmic algebraic
    geometry}, preliminary manuscript

\bibitem[S]{saito} M. Saito, {\em Koll\'ar's conjecture}, Several
  complex variables and complex geometry, AMS (1991)

\bibitem[T]{takegoshi} K. Takegoshi, {\em Higher direct images of canonical sheaves tensorized with semi-positive vector bundles by proper K\"ahler morphisms.} 
Math. Ann. 303 (1995)

\end{thebibliography}
\end{document}